\newcommand{\qee}{ \hfill\hspace{2pt}$\triangle$}
\newtheorem{thm}{Theorem}[section]
\newtheorem{corol}[thm]{Corollary}
\newtheorem{lemma}[thm]{Lemma}
\newtheorem{prop}[thm]{Proposition}
\theoremstyle{remark}
\newtheorem{rema}[thm]{Remark}
 \newenvironment{remark}{\begin{rema}}{\qee\end{rema}}
\newtheorem{exe}[thm]{Example}
\newcommand{\iso}{ \ \raise 4pt\hbox{$\sim$} \kern -10pt\hbox{$\to$}\  }
\newcommand{\cO}{{\mathcal O}}
\newcommand{\PP}{{\mathbb P}}
\newcommand{\Ext}{{\rm Ext}}
\newcommand{\cI}{{\mathcal I}}
\newcommand{\M}{{\mathfrak M}}
\newcommand{\F}{{\mathcal F}}
\newcommand{\E}{{\mathcal E}}
\newcommand{\Z}{{\mathbb Z}}
\newcommand{\C}{{\mathbb C}}
\newcommand{\FF}{{\mathbb F}}
\newcommand{\beq}{\begin{equation}}
\newcommand{\eeq}{\end{equation}}
\newcommand{\beqa}{\begin{eqnarray}}
\newcommand{\eeqa}{\end{eqnarray}}
\newcommand{\uno}{\mbox{1\kern-.59em {\rm l}}}
\newcommand{\nn}{\nonumber}
\newcommand{\be}{\begin{equation}}
\newcommand{\ee}{\end{equation}}
\newcommand{\bea}{\begin{eqnarray}}
\newcommand{\eea}{\end{eqnarray}}
\begin{document}
\begin{flushright} SISSA Preprint 56/2009/fm\\
{\tt arXiv:0909.1458v2 [math.AG]}
\end{flushright}
\title[Poincar\'e polynomial of moduli spaces of framed sheaves]{Poincar\'e polynomial of moduli spaces of framed \\[5pt] sheaves on (stacky)  Hirzebruch surfaces}
\bigskip
\date{\today}
\subjclass[2000]{14D20; 14D21;14J60; 81T30; 81T45}
\keywords{Instantons, framed sheaves, moduli spaces, Poincar\'e polynomial, partition functions}
\thanks{This research was partly supported by the INFN Research Project PI14 ``Nonperturbative dynamics of gauge theory", by   PRIN    ``Geometria delle variet\`a
algebriche", by an Institutional Partnership Grant of the Humboldt
foundation of Germany and European Commission FP7 Programme Marie
Curie Grant Agreement PIIF-GA-2008-221571
 \\[5pt] \indent E-mail: {\tt bruzzo@sissa.it}, {\tt poghos@yerphi.am}, {\tt tanzini@sissa.it} }
 \maketitle \thispagestyle{empty}
\begin{center}{\sc Ugo Bruzzo}$^\ddag$,  {\sc Rubik Poghossian}$^{\S}$ and {\sc Alessandro Tanzini}$^\ddag$
\\[10pt]  {\small
$^\ddag$ Scuola Internazionale Superiore di Studi Avanzati, \\ Via Beirut 2-4, I-34013
Trieste, Italia \\ and Istituto Nazionale di Fisica Nucleare, Sezione di Trieste
}
\\[10pt]  {\small
$^\S$  Istituto Nazionale di Fisica Nucleare, Sezione di Roma Tor Vergata,\\
Via della Ricerca Scientifica, I-00133 Roma, Italia}
\\  {\small
and  Yerevan Physics Institute, Alikhanian Br. st. 2, 0036
Yerevan, Armenia}
\end{center}

\begin{abstract} We perform a study of the moduli space of framed torsion-free sheaves on Hirzebruch surfaces by using localization techniques. 
We discuss some general properties of this moduli space by studying it in the framework
of Huybrechts-Lehn theory of framed modules. 
We classify the fixed points under a toric action on the moduli space, and use this to compute the   Poincar\'e polynomial of the latter.   This will imply that the moduli spaces we are considering are irreducible. We also consider  fractional first Chern classes, which means that we are extending our computation to a stacky deformation of a Hirzebruch surface. 
From the physical viewpoint,  our results provide the partition function of  $N=4$ Vafa-Witten theory on total spaces of the bundles $\cO_{\PP^1}(-p)$, which is relevant in black hole entropy counting problems according to a conjecture due to Ooguri, Strominger and Vafa.  \end{abstract}

\bigskip
\section{Introduction}

In this paper we study the moduli space of framed torsion-free sheaves on a Hirzebruch surface
$\FF_p$. We start by studying the geometry of this moduli space within the framework 
of Huybrechts-Lehn's theory of framed modules. We then consider a   natural toric action on 
the moduli space,  determining its fixed points and the weight decomposition of the tangent spaces at the fixed points. This will in turn allow us to compute the Morse indexes at the fixed points, and therefore the Poincar\'e polynomial of the moduli space. As a byproduct, we show that the moduli spaces we consider are irreducible.

A conjectural formula for instanton counting on the total spaces of the bundles $\cO_{\PP^1}(-p)$   (which we denote $X_p$ in this paper) was given in
\cite{fmr} and   \cite{GSST}   using string theory techniques
and   two-dimensional reduction, respectively. 
Hirzebruch surfaces    $\FF_p$ are  projective
compactifications of the   spaces $X_p$, and indeed the results of this paper provide
a proof of the conjectural formulas of \cite{fmr,GSST} for integer values of the first Chern class.
Actually our computations also make sense for fractional first Chern classes
(to be more precise, for Chern classes $c_1(E)=\frac{m}{p}C$, where $m$ is an integer,
and $C$ is the ``exceptional curve'' in $\FF_p$). This may interpreted as
a computation related to torsion-free sheaves on a stacky compactification of
$X_p$, as we shall show in section \ref{ppol}. This relates to the stacky compactifications
of ALE spaces discussed in \cite{NakaMoscow}.

The structure of this paper is as follows. In Section \ref{physics}
we provide some physical motivations. In Section \ref{modulis} we give some details
about the structure of the moduli spaces we want to study. In Section \ref{ppol} we 
compute the Poincar\'e polynomial of these moduli spaces. In Section \ref{checks} we draw
some conclusions and check our formula for the Poincar\'e polynomial in some
particular cases.

{\bf Ackowledgements.} We thank Hua-Lian Chang, Rainald Flume, Francesco Fucito, Emanuele Macr\`\i, Dimitri Markushevich, J. Francisco Morales and Claudio Rava for useful suggestions. A special thank to to Fabio Nironi for 
enlightening discussions on the stacky Hirzebruch surfaces. The second author acknowledges hospitality and support from SISSA. 

\bigskip\section{Physical motivations}\label{physics}
In this short section we want to provide some physical motivations and perspectives 
for the mathematical results we give in this paper. The reader who is not interested
in this may safely skip to the next section.

The Ooguri-Strominger-Vafa conjecture \cite{osv} relates the counting of microstates of
supersymmetric black holes to that of bound states of Dp-branes wrapping cycles of
the internal Calabi-Yau threefold in string theory compactifications.
For the D4-D2-D0 system, with the Euclidean D4 branes wrapping a four cycle $M$ of the
Calabi-Yau threefold, this counting problem  has 
a rigorous mathematical definition in terms of the generating functional
of the Euler characteristics of instanton moduli spaces on $M$
with   first and second Chern characters fixed by 
to the numer of D0-branes and D2-branes respectively.
In physical terms, this corresponds to the partition function
of the Vafa-Witten twisted $N$=4 supersymmetric theory on $M$ \cite{VW}. 

In \cite{aosv} the example of a local Calabi-Yau modeled as the total
space of a rank two bundle over a Riemann surface $\Sigma_g$
was studied in detail, providing some support for the
conjecture. 
Very little is known about the moduli space of instantons on such
spaces; indeed the authors of \cite{aosv} proceeded by conjecturing
a reduction to the base $\Sigma_g$.
An argument clarifying this reduction was provided in \cite{BonTa}
via path-integral localization. Actually, rigorous calculations of the instanton partition function
are available in a limited number of examples, namely
the case of $p=1$,  where the blowup formulas of
\cite{VW,Yoshi}  apply, and $p=2$.
Indeed,   $X_2$ is the $A_1$ ALE space
\cite{KN, naka-ale}
 (for details on  instanton counting
on $A_{p-1}$ ALE spaces the reader may refer to \cite{KN,Fucito:2004ry,fmr}).

One relevant feature of instantons on the spaces $X_p$   is the appearance of solutions with fractional 
first Chern class. These are related to instantons that asymptote to flat connections with nontrivial holonomy
at the boundary of $X_p$, which is topologically $S^3/\Gamma$ . The flat connections are classified by irreducible representations
of the finite group $\Gamma$. In this paper we propose an algebro-geometric counterpart
of these solutions by considering 
the global quotient $\mathbb{P}^2/\Gamma$ and performing a minimal resolution
of the singularity at the origin. The resulting variety is a toric Deligne-Mumford stack $\mathcal X_p$ whose coarse space
is the Hirzebruch surface $\FF_p$; it corresponds to
a ``stacky" compactification of $X_p$ obtained by adding a divisor $\tilde C_\infty\simeq \mathbb{P}^1/\Gamma$. 

Torsion free sheaves framed on $\tilde C_\infty$ provide solutions with fractional first Chern class.
We will compute the Poincar\'e polynomial of the moduli space of these framed sheaves via localization and show that
the resulting generating function of Euler characteristics is in agreement with the conjectural formulae
of \cite{fmr} and \cite{GSST}. This has a nice expression in terms of modular forms.
The details of this construction are presented in Sect.4.

\bigskip\section{Moduli of framed sheaves on Hirzebruch surfaces}  \label{modulis}
In this section we briefly study the structure of the moduli space of framed sheaves on Hirzebruch surfaces. 

\subsection{Hirzebruch surfaces}
We denote by $\FF_p $ the $p$-th Hirzebruch surface
$\FF_p=\PP(\cO_{\PP^1} \oplus\cO_{\PP^1}(-p))$, which is the projective closure
of the total space $X_p$ of the line bundle $\cO_{\PP^1}(-p)$ on $\PP^1$
(a useful reference about Hirzebruch surfaces is \cite{BHPV}).
This may be explicitly described as the divisor in $ \PP^2\times\PP^1$
$$
\FF_p =  \{ ([z_0 : z_1 : z_2], [z : w] \in \PP^2\times\PP^1\mid
   z_1 w^p = z_2 z^p \},
$$
Denoting by $f: \FF_p \to \PP^2$   the projection onto $\PP^2$,
we let $C_\infty=f^{-1}(l_\infty)$, where $l_\infty$ is the  ``line at infinity" 
$z_0=0$. The Picard group of $\FF_p$ is generated
by $C_\infty$ and the fibre $F$ of the projection $\FF_p\to\PP^1$.
One has
$$C_\infty^2=p,\qquad C_\infty\cdot F=1,\qquad F^2=0\,.$$
The canonical divisor $K_{p}$ may be expressed as
$$
K_{p} = - 2 C_\infty + (p-2) F.
$$

We shall consider the moduli space $\M^p(r,k,n)$ parametrizing isomorphism classes
of pairs $(\E,\phi)$, where 
\begin{itemize}\item $\E$ is a torsion-free coherent sheaf on $\FF_p$, whose
topological invariants are the rank $r$, the first Chern class $c_1(\E)=kC$, and the
discriminant
$$ \Delta(\E)=c_2(\E)-\frac{r-1}{2r}c_1^2(\E)= n;$$
\item $\phi$ is a framing on $C_\infty$,  i.e., an isomorphism of the restriction of $\E$ to
$C_\infty$ with the trivial rank $r$ sheaf on $C_\infty$:
$$\phi\colon \E_{\vert C_\infty} \iso \cO_{C_\infty}^{\oplus r}.$$
\end{itemize}
In constructing the moduli space $\M^p(r,k,n)$ one only considers isomorphisms which preserve
the framing, i.e., $(\E,\phi)\simeq(\E',\phi')$ if there is an isomorphism $\psi\colon\E\to\E'$ such that
$\phi'\circ\psi=\phi$. While a point in $\M^p(r,k,n)$  should always be denoted as the class
of a pair $(\E,\phi)$, we shall be occasionally a little sloppy in our notation,  omitting the framing $\phi$.

For $r=1$, the value of $k$ is irrelevant, and every moduli space  $\M^p(1,k,n)$
turns out to be isomorphic to the Hilbert scheme $X_p^{[n]}$ parametrizing
length $n$ 0-dimensional subschemes of $X_p$. The structure of the moduli space
for $r>1$ can be studied using an ADHM description \cite{Rava}, which shows that the moduli space is a smooth algebraic variety of dimension $2rn$. However we are not going
to give this description here.

\subsection{Structure of the moduli space} A general result about the structure
of moduli spaces of framed sheaves on a projective surface $X$ was given by Nevins \cite{Nev}, who
showed that, under some mild rigidity condition, there is a fine moduli scheme.
However in the specific case at hand much more can be said.
The structure of the moduli space may be studied by showing that,  under some conditions,
and after a suitable choice a polarization in $X$, and a certain stability parameter, framed sheaves yield
stable pairs according to the theory developed by Huybrechts and Lehn \cite{HL1,HL2}. 
The moduli space turns out to be a quasi-projective separated scheme of finite type \cite{BM}.
 The obstruction to smoothness at a point $[(\E,\phi)]$ of the moduli space
lies in the kernel of the trace map $$\operatorname{Ext}^2(\E,\E(-D))\to H^2(X,\cO(-D))$$
where $D$ is the framing divisor.
In our case $\operatorname{Ext}^2(\E,\E(-C_\infty))=0$, hence the moduli space is smooth
(and then is an algebraic variety).

 The result one gets in our case is the following.
 
\begin{prop} \label{moduli} The moduli space $\M^p(r,k,n)$, when nonempty, is a smooth   quasi-projective variety
of dimension $2rn$. Its tangent space at a point $[\E]$ is isomorphic to the vector space
$\Ext^1(\E,\E(-C_\infty))$. 
\end{prop}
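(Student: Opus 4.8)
The plan is to obtain all three assertions from the deformation theory of framed modules together with one Riemann--Roch computation, taking for granted the facts recalled above: the existence of $\M^p(r,k,n)$ as a quasi-projective separated scheme of finite type (from the Huybrechts--Lehn theory of framed modules \cite{HL1,HL2} and its application in \cite{BM}), and the vanishing $\Ext^2(\E,\E(-C_\infty))=0$. I would begin by regarding a framed sheaf $(\E,\phi)$ as a framed module with framing sheaf $\cO_{C_\infty}^{\oplus r}$ and framing morphism $\E\to\E_{\vert C_\infty}\iso\cO_{C_\infty}^{\oplus r}$, and recall that the framing-preserving infinitesimal deformations are exactly those deformations of $\E$ that are trivial along $C_\infty$. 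Applying $\Hom(\E,-)$ to the exact sequence $0\to\E(-C_\infty)\to\E\to\E_{\vert C_\infty}\to0$ realizes the relevant groups as $\Ext^\bullet(\E,\E(-C_\infty))$: the tangent space at $[(\E,\phi)]$ is $\Ext^1(\E,\E(-C_\infty))$, and the obstruction lives in $\Ext^2(\E,\E(-C_\infty))$, in the kernel of the trace map to $H^2(\FF_p,\cO(-C_\infty))$.

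Since $\Ext^2(\E,\E(-C_\infty))=0$, the obstruction space vanishes and $\M^p(r,k,n)$ is smooth, with $\dim_\C\M^p(r,k,n)=\dim_\C\Ext^1(\E,\E(-C_\infty))$. To turn this into a number I would use that every $\Ext$-group except $\Ext^1$ vanishes, so $\dim\Ext^1=-\chi(\E,\E(-C_\infty))$. The group $\Ext^2$ vanishes by hypothesis; for $\Ext^0$ I would argue that a homomorphism $\E\to\E(-C_\infty)$ restricts on $C_\infty$ to an element of $\Hom_{C_\infty}(\cO_{C_\infty}^{\oplus r},\cO_{C_\infty}(-p)^{\oplus r})=0$, hence vanishes there, and then --- $\E$ being torsion-free --- vanishes identically; conceptually this is the simplicity of the framed pair. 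The same restriction argument, applied through Serre duality, reproves $\Ext^2(\E,\E(-C_\infty))\cong\Hom(\E,\E(-C_\infty+(p-2)F))^*=0$, since $-C_\infty+(p-2)F$ meets $C_\infty$ in degree $-2$.

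It then remains to evaluate $\chi(\E,\E(-C_\infty))$ by Hirzebruch--Riemann--Roch on $\FF_p$, substituting $\operatorname{rk}\E=r$, $c_1(\E)=kC$, the discriminant normalization $\Delta(\E)=c_2(\E)-\frac{r-1}{2r}c_1^2(\E)=n$, and the intersection data $C_\infty^2=p$, $C_\infty\cdot F=1$, $F^2=0$, $K_p=-2C_\infty+(p-2)F$; the expected value is $\chi(\E,\E(-C_\infty))=-2rn$, giving $\dim\Ext^1=2rn$. I expect the main obstacle to be twofold. First, one must make the framed deformation theory precise enough to be certain the controlling groups are the twisted $\Ext^\bullet(\E,\E(-C_\infty))$ rather than the untwisted $\Ext^\bullet(\E,\E)$ --- this is exactly where the framing and the endomorphism vanishings above enter. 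Second, the Riemann--Roch bookkeeping must be carried out carefully enough to confirm that all $p$- and $k$-dependent contributions cancel, leaving the clean answer $2rn$, independent of the first Chern class and of the surface. With these in hand, quasi-projectivity from the cited construction completes the proof.
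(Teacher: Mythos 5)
Your strategy coincides with the paper's wherever the paper actually argues: existence and quasi-projectivity come from the Huybrechts--Lehn theory of framed modules as implemented in \cite{BM}; the identification of the tangent space with $\Ext^1(\E,\E(-C_\infty))$ and of the obstruction space with the kernel of the trace map on $\Ext^2(\E,\E(-C_\infty))$ is taken from that same theory; and smoothness follows from $\Ext^2(\E,\E(-C_\infty))=0$, which the paper asserts without proof. The genuine difference is the dimension count: the paper never computes $\dim\Ext^1(\E,\E(-C_\infty))$, but imports the value $2rn$ from the ADHM description of these moduli spaces in \cite{Rava}, whereas you obtain it intrinsically as $-\chi(\E,\E(-C_\infty))$ via Riemann--Roch. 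Your computation does close: using $C_\infty^2=p$, $C_\infty\cdot K_p=-p-2$ and $\chi(\cO_{\FF_p})=1$, all $p$- and $k$-dependent terms cancel and $\chi(\E,\E(-C_\infty))=(r-1)c_1(\E)^2-2rc_2(\E)=-2r\Delta(\E)=-2rn$. What this buys is independence from the ADHM construction; the price is that you need \emph{both} $\Ext^0$ and $\Ext^2$ to vanish, and this is exactly where your write-up has a gap.

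The gap: from $\phi|_{C_\infty}\in\Hom(\cO_{C_\infty}^{\oplus r},\cO_{C_\infty}(-p)^{\oplus r})=0$ you infer that $\phi\in\Hom(\E,\E(-C_\infty))$ ``vanishes on $C_\infty$, hence, $\E$ being torsion-free, vanishes identically.'' That inference is invalid: the canonical section $\cO_{\FF_p}\to\cO_{\FF_p}(C_\infty)$ is a nonzero map of torsion-free sheaves whose restriction to $C_\infty$ is zero. Torsion-freeness only kills sections vanishing on a \emph{dense open} subset, and the divisor $C_\infty$ is not dense. The standard repair is iteration: $\phi|_{C_\infty}=0$ means $\phi$ factors through $\ker\bigl(\E(-C_\infty)\to\E(-C_\infty)|_{C_\infty}\bigr)=\E(-2C_\infty)$; the restriction of the resulting map to $C_\infty$ lies in $\Hom(\cO_{C_\infty}^{\oplus r},\cO_{C_\infty}(-2p)^{\oplus r})=0$, so $\phi$ factors through $\E(-3C_\infty)$, and inductively through $\E(-mC_\infty)$ for every $m\ge 1$. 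At a point $x\in C_\infty$, with $f$ a local equation of $C_\infty$, the image of the stalk map $\phi_x$ then lies in $\bigcap_{j\ge 0} f^{j}\,\E(-C_\infty)_x=0$ by Krull's intersection theorem; hence $\phi$ vanishes in a neighborhood of $C_\infty$, and its image, being a torsion subsheaf of the torsion-free sheaf $\E(-C_\infty)$, is zero. The same iteration disposes of your Serre-dual group $\Hom(\E,\E(-C_\infty+(p-2)F))$, since the relevant restriction degrees $-mp+p-2$ remain negative for all $m\ge 1$. With this lemma in place (it is the usual simplicity statement for sheaves framed along a divisor, cf. \cite{BM}), your proof is complete and is a legitimate, more self-contained alternative to the paper's reliance on \cite{Rava}.
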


 \subsection{Toric action} \label{torusaction} 
 The two-dimensional algebraic torus $\C^\ast\times\C^\ast$ acts on $\FF_p$  according to
$$
([z_0 : z_1 : z_2], [z : w]) \stackrel{G_{t_1,t_2}}{\relbar\joinrel\relbar\joinrel\relbar\joinrel\longrightarrow}
([z_0 : t_1^p z_1 : t_2^p z_2], [t_1 z : t_2 w])
$$
The divisor $C=f^{-1}([1:0:0])$ and $C_\infty$ are fixed under this action. 
Note that one has
$C=C_\infty-pF$ as divisors modulo linear equivalence. 
Moreover, this action has four fixed points, i.e., $p_1=([1:0:0],[0:1]$ and $p_2=([1:0:0],[1:0]$
 lying on the exceptional line $C$, and two points lying on the line at infinity $C_\infty$. 
 The invariance of $C_\infty$ implies that 
the pullback  $G_{t_1,t_2}^\ast$ defines an action on  $\M^p(r,k,n)$. Moreover we have an action of the diagonal maximal torus of
 $Gl(r,\C)$ on the framing. Altogether we have an action of the torus $T=(\C^\ast)^{r+2}$ on 
 $\M^p(r,k,n)$ given by 
\begin{equation}\label{action}
    (t_1,t_2,e_1,\dots,e_r)\cdot (\E,\phi)
    = \left((G_{t_1,t_2}^{-1})^\ast \E, \phi'\right),
\end{equation}
where $\phi'$ is defined as the composition 
$$   (G_{t_1,t_2}^{-1})^* \E_{\vert C_\infty} 
   \xrightarrow{(G_{t_1,t_2}^{-1})^*\phi}
   (G_{t_1,t_2}^{-1})^* \cO_{C_\infty}^{\oplus r}
   \longrightarrow \cO_{C_\infty}^{\oplus r}
   \xrightarrow{e_1,\dots, e_r} \cO_{C_\infty}^{\oplus r}.
$$
 
 We study now the fixed point sets for the action of $T$ on $\M^p(r,k,n)$. This is basically the same statement as in \cite{NY-I} (see also \cite{NY-L} and \cite{GaLiu}),  but for the sake of completeness we give here a sketch of the proof .
  
 \begin{prop} The fixed points of the action \eqref{action} of $T$ on $\M^p(r,k,n)$ are
 sheaves of the type
\begin{equation}\label{fixpo} \E = \bigoplus_{\alpha =1}^r \cI_{\alpha } (k_\alpha C)\end{equation}
 where $\cI_{\alpha }$ is the ideal sheaf of a 0-cycle $Z_\alpha $ supported on $\{p_1\}\cup\{p_2\}$
 and $k_1,\dots,k_r$ are integers which sum up to $k$. The $\alpha$-th factor in this decomposition corresponds, via the framing, to the $\alpha$-th factor in $\cO_{C_\infty}^{\oplus r}$. Moreover, 
\begin{equation}\label{nfixpo} \  n = \ell +\frac{p}{2r}\left(r\sum_{\alpha =1}^rk_\alpha ^2-k^2\right)
= \ell  + \frac{p}{2r} \sum_{\alpha <\beta} (k_\alpha  -k_\beta)^2\,
\end{equation}
 where $\ell$ is the length of the singularity set of $\E$. 
\label{fp} 
\end{prop}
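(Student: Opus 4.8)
The plan is to exploit the two kinds of symmetry in $T=(\C^\ast)^{r+2}$ separately: first the \emph{framing} torus $(\C^\ast)^r$ acting on the trivialization $\cO_{C_\infty}^{\oplus r}$, which will force a direct-sum splitting into rank-one pieces, and then the \emph{geometric} torus $\C^\ast\times\C^\ast$ acting on $\FF_p$, which will identify each piece. I would begin by setting $t_1=t_2=1$ in \eqref{action} and studying invariance under the subtorus $(\C^\ast)^r$. If $[(\E,\phi)]$ is fixed, then for each diagonal $\operatorname{diag}(e_1,\dots,e_r)$ there is an isomorphism $\psi_e\colon\E\to\E$ intertwining $\phi$ with $\operatorname{diag}(e_1^{-1},\dots,e_r^{-1})\circ\phi$ on $C_\infty$. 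Since $\phi$ is an isomorphism along $C_\infty$ and $\E$ is torsion-free, such a framed automorphism is unique (framed sheaves are rigid), so $e\mapsto\psi_e$ is a homomorphism $(\C^\ast)^r\to\operatorname{Aut}(\E)$. Diagonalizing it produces a weight decomposition $\E=\bigoplus_{\alpha=1}^r\E_\alpha$ into $T$-eigensheaves, and matching weights with $\phi$ shows that $\phi$ carries $\E_\alpha{}_{\vert C_\infty}$ isomorphically onto the $\alpha$-th summand $\cO_{C_\infty}$. Hence each $\E_\alpha$ is a rank-one torsion-free sheaf framed on $C_\infty$. This is the step that follows the argument of \cite{NY-I}.

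Next I would classify the pieces $\E_\alpha$ under the remaining geometric torus. Writing $\E_\alpha=L_\alpha\otimes\cI_{Z_\alpha}$ with $L_\alpha=\E_\alpha^{\vee\vee}$ a line bundle (using smoothness of $\FF_p$) and $Z_\alpha$ a $0$-dimensional subscheme, invariance of $\E_\alpha$ forces both $L_\alpha$ and $Z_\alpha$ to be invariant. The framing requires $L_\alpha{}_{\vert C_\infty}\cong\cO_{C_\infty}$; since $C\cdot C_\infty=0$ and $F\cdot C_\infty=1$, a class $aC+bF$ restricts to degree $b$ on $C_\infty\cong\PP^1$, so $b=0$ and $L_\alpha=\cO(k_\alpha C)$. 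Because $\E$ is locally free near $C_\infty$ (the framing trivializes it there), $Z_\alpha$ cannot meet $C_\infty$; being torus-invariant it is supported at the fixed points $p_1,p_2$ lying on $C$. This gives the decomposition \eqref{fixpo}, and additivity of the first Chern class yields $\sum_\alpha k_\alpha=k$. The converse inclusion is immediate, since any such invariant direct sum with the induced diagonal framing is manifestly $T$-fixed.

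Finally, \eqref{nfixpo} reduces to a Chern-class computation on the direct sum. From $C=C_\infty-pF$ one gets $C^2=-p$, and each summand $\cI_\alpha(k_\alpha C)$ contributes $c_1=k_\alpha C$ and $c_2=\ell_\alpha:=\operatorname{length}(Z_\alpha)$. Therefore $c_1(\E)^2=-pk^2$ and $c_2(\E)=\ell+\bigl(\sum_{\alpha<\beta}k_\alpha k_\beta\bigr)C^2=\ell-p\sum_{\alpha<\beta}k_\alpha k_\beta$, where $\ell=\sum_\alpha\ell_\alpha$ is the total length of the singularity set. Substituting into $\Delta(\E)=c_2(\E)-\frac{r-1}{2r}c_1(\E)^2=n$ and collecting the $k^2$ terms gives $n=\ell+\frac{p}{2r}\bigl(r\sum_\alpha k_\alpha^2-k^2\bigr)$; the identity $\sum_{\alpha<\beta}(k_\alpha-k_\beta)^2=r\sum_\alpha k_\alpha^2-k^2$ then produces the second form in \eqref{nfixpo}.

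The main obstacle is the first step: converting the purely moduli-theoretic fixed-point condition (invariance only up to a framing-preserving isomorphism) into an honest $(\C^\ast)^r$-action by automorphisms, and hence into a genuine direct-sum splitting. This hinges on the rigidity of framed sheaves to guarantee uniqueness of the $\psi_e$, and on checking that each eigensheaf $\E_\alpha$ stays torsion-free of rank one and inherits a compatible framing; once this is secured, the geometric classification and the numerical identity are routine.
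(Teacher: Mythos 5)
Your proof is correct, and its overall skeleton coincides with the paper's: split $\E$ into rank-one torsion-free summands, take double duals to identify the line-bundle parts, use invariance together with the framing to locate the $0$-cycles at $p_1,p_2$, and finish with a Chern-class computation (which you carry out in full and correctly, where the paper only says it is a ``straight calculation''). Where you genuinely diverge is the mechanism that produces the splitting. The paper works at the generic fibre: it tensors $\E$ with the sheaf of rational functions $\mathcal K$, chooses a trivialization of the free $\mathcal K$-module $\E\otimes\mathcal K$ whose restriction to $C_\infty$ is the eigenspace decomposition of the framing torus, and recovers the summands as $\E_\alpha=\E'_\alpha\cap\E$. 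You instead promote the fixed-point condition to an honest $(\C^\ast)^r$-action on $\E$ itself by automorphisms, using rigidity of framed sheaves (absence of nontrivial framed automorphisms) to make each lift $\psi_e$ unique, hence $e\mapsto\psi_e$ multiplicative, and then decompose $\E$ into weight subsheaves; this is essentially the argument of \cite{NY-L}. Your route buys transparency: the paper's intersection $\E'_\alpha\cap\E$ exhausts $\E$ only because every local section decomposes into eigensections, i.e., it implicitly relies on exactly the equivariant structure you construct, so your version makes explicit the step the paper elides — and you are right to identify it as the crux. What it costs is that you must invoke rigidity and, strictly speaking, the algebraicity of $e\mapsto\psi_e$ (needed to diagonalize); you flag the former and pass over the latter, which is the same level of rigour as the paper's sketch. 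A further small improvement: you pin down $L_\alpha=\cO(k_\alpha C)$ from the framing condition via $C\cdot C_\infty=0$ and $F\cdot C_\infty=1$, whereas the paper attributes this to $T$-invariance, which by itself is not the operative constraint, since every line bundle on a toric surface is invariant under the connected torus.
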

 \begin{proof} We first check that if $\E$ is fixed under the $T$-action, then
 $\E=\oplus_{\alpha =1}^r\E_\alpha $, where each $\E_\alpha $ is a $T$-invariant rank-one torsion-free sheaf
 on $\FF_p$. Let $\mathcal K$ be the sheaf of rational functions on $\FF_p$. Then $\E'=\E\otimes\mathcal K$ is free as a $\mathcal K$-module. Choose a trivialization $\E'\simeq \oplus_{\alpha =1}^r\E_\alpha '$ which, when restricted to $C_\infty$, provides an eigenspace decomposition for the action of $T$.
 Then set $\E_\alpha =\E_\alpha '\cap\E$, i.e., pick up the holomorphic sections of $\E'_\alpha $. This provides the desired decomposition. 
 
By taking the double dual of $\E$, and using $T$-invariance, we obtain  $ \E^{\ast\ast} \simeq \bigoplus_{\alpha =1}^r \cO_{\FF_p} (k_\alpha C)$, whence $\E$ has the form \eqref{fixpo}. Since the 0-cycles $Z_\alpha$ have to be $T$-invariant and should not be supported on $C_\infty$, they must be supported as claimed. The numerical equality \eqref{nfixpo} follows from a straight calculation.
 \end{proof}
 
 The exact identification of the fixed points is obtained by using some Young tableaux combinatorics. The precursor of this technique is Nakajima's seminal book \cite{NakaBook} where the case of the Hilbert scheme of points of $\C^2$ is treated (from a gauge-theoretic viewpoint, this is the ``rank 1 case'' for framed instantons on $S^4$). This was generalized to higher rank in \cite{Nek,Flupo,BFMT}. 
 
 A word on notation: if $Y$ is a  Young tableau, $\vert Y \vert $ will denote the   number of boxes in it.  In the case at hand, it turns out that to each fixed point one should attach
 an $r$-ple $\{Y^{(i)}_\alpha\}$ of pairs of Young tableaux (so $i=1,2$ and $\alpha=1,\dots,r$).
 Write $Z_\alpha=Z_\alpha^{(1)}\cup Z_\alpha^{(2)}$, where $Z_\alpha^{(i)}$ is supported at $p_i$.
 The Young tableau  $\{Y^{(i)}_\alpha\}$, for $i$ and $\alpha$ fixed, is attached to the ideal sheaf
 $\cI_{Z_\alpha^{(i)}}$ as usual: choose local affine coordinates $(x,y)$ around $p_i$
 and make a correspondence between the boxes of $\{Y^{(i)}_\alpha\}$ and monomials in $x$, $y$
 in the usual way (cf.~\cite{NakaBook}); then $\cI_{Z_\alpha^{(i)}}$ is generated by the monomials lying outside the tableau. Now the identity \eqref{nfixpo} may be written as 
\begin{equation}\label{countboxes}
n =  \sum_\alpha  \left( \vert
Y_\alpha ^1 \vert +\vert Y_\alpha ^2 \vert \right) +
\frac{p}{2r} \sum_{\alpha <\beta} (k_\alpha  -k_\beta)^2\,.\end{equation}
Looking for all collections of Young tableaux and strings of integers $k_1,\dots,k_r$ satisfying this condition together with $\sum_{\alpha=1}^rk_\alpha=k$, one enumerates all
the fixed points.

\subsection{Nonemptiness of the moduli space} The moduli spaces 
$\M^p(r,k,n)$ may be given an ADHM description, as shown in \cite{Rava}. 
The analysis performed in \cite{Rava} implies the following results.
Note  that the value of $k$ may be normalized in the range $ 0 \le k < r-1$ upon twisting
by $\cO_{\FF_p}(C)$, and in the sequel we shall assume that this has been done.
\begin{prop}  \begin{enumerate} \item
The moduli space $\M^p(r,k,n)$ is nonempty if and only if the number 
$n-\frac{r-1}{2r}pk^2$ is an integer, and the bound
\begin{equation}\label{bound} n \ge N=\frac{pk}{2r}(r-k)\end{equation}  holds. 
\item All sheaves in $\M^p(r,k,N)$ are locally free.
\item The moduli space $\M^1(r,k,N)$ is isomorphic to the Grassmannian
variety $G_k(r)$ of $k$-planes in $\C^r$. For $p>1$, the space $\M^p(r,k,N)$
is isomorphic to a rank $(p-1)$ vector bundle on $G_k(r)$. In particular,
for all $p\ge 1$ the moduli space  $\M^p(r,k,N)$ has the homotopy type of
a compact space.
\end{enumerate}\label{boundprop}
\end{prop}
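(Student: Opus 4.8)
The plan is to reduce all three statements to the fixed-point classification of Proposition~\ref{fp}, combined with the double-dual exact sequence and the sharp minimization of \eqref{nfixpo} over the integral weights $(k_1,\dots,k_r)$. The integrality condition is the easy half: for any $(\E,\phi)$ one has $c_1(\E)^2=k^2C^2=-pk^2$, so $c_2(\E)=n-\frac{r-1}{2r}pk^2$ must be an integer, which is exactly the stated congruence. Moreover $N-\frac{r-1}{2r}pk^2=-\frac p2\,k(k-1)\in\Z$ (an integer since $k(k-1)$ is even), so the two conditions of (i) together force $n-N\in\Z_{\ge0}$; this is the number I will realize as a length below.

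For sufficiency I would exhibit an explicit fixed point. After the normalisation $0\le k<r$, take the balanced weights $k_\alpha\in\{0,1\}$ with exactly $k$ of them equal to $1$, so that $\sum_\alpha k_\alpha=k$ and, by \eqref{nfixpo}, $\frac p{2r}\sum_{\alpha<\beta}(k_\alpha-k_\beta)^2=\frac p{2r}k(r-k)=N$. Put $\cI_\alpha=\cO_{\FF_p}$ for all but one index and let the remaining ideal cut out a $T$-invariant length-$(n-N)$ subscheme supported at $p_1$. Then $\E=\bigoplus_\alpha\cI_\alpha(k_\alpha C)$ as in \eqref{fixpo} is torsion-free with $c_1=kC$ and $\Delta(\E)=n$; since $C\cdot C_\infty=0$ each summand $\cO_{\FF_p}(k_\alpha C)$ restricts trivially to $C_\infty$, so $\E_{\vert C_\infty}\cong\cO_{C_\infty}^{\oplus r}$ and $\E$ inherits a framing. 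This produces a point of $\M^p(r,k,n)$ whenever the conditions of (i) hold.

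The converse bound carries the real content. It follows once one knows that a nonempty $\M^p(r,k,n)$ contains a $T$-fixed point $\E_0=\bigoplus_\alpha\cI_\alpha(k_\alpha C)$: reading \eqref{nfixpo} for $\E_0$ and discarding its nonnegative length gives $n\ge\frac p{2r}\sum_{\alpha<\beta}(k_\alpha-k_\beta)^2\ge N$, the last step being the elementary minimization over integers $k_\alpha$ of sum $k$, whose minimum $N$ is attained exactly when the $k_\alpha$ differ pairwise by at most one. The delicate point is thus the existence of a fixed point, since $\M^p(r,k,n)$ is only quasi-projective: one needs the one-parameter subgroup of $(\C^\ast)^2$ that contracts $\FF_p$ toward $\{p_1,p_2\}$ while fixing $C_\infty$ to have a limit inside the moduli space. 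This semiprojectivity is where I expect the main difficulty to sit; it can be extracted from the ADHM presentation of \cite{Rava}, or argued by hand through the valuative criterion applied to the flat limit of $(\E,\phi)$. Part (ii) then follows by bootstrapping: for any $(\E,\phi)$ the sequence $0\to\E\to\E^{\ast\ast}\to Q\to0$ with $Q$ of length $\ell\ge0$ gives $n=\Delta(\E^{\ast\ast})+\ell$, and since $\E$ is locally free near the framing curve the reflexive hull $\E^{\ast\ast}$ is itself a locally free framed sheaf, hence a point of $\M^p(r,k,\Delta(\E^{\ast\ast}))$; the bound just proved yields $\Delta(\E^{\ast\ast})\ge N$, so $\ell=n-\Delta(\E^{\ast\ast})\le n-N$. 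At $n=N$ this forces $\ell=0$, i.e.\ $\E=\E^{\ast\ast}$ is locally free.

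Finally, for (iii) I would study the minimal-charge framed bundles directly. Such a $V$ is trivial on $C_\infty$ and, by the equality case above, restricts to the balanced splitting type on the exceptional curve $C$; it is then reconstructed from $\cO_{\FF_p}^{\oplus r}$ by a single elementary modification along $C$ whose datum is a $k$-plane in the framing fibre $\C^r$, i.e.\ a point of $G_k(r)$. For $p=1$ this datum determines $V$ completely, and the dimension count $2rN=k(r-k)=\dim G_k(r)$ confirms $\M^1(r,k,N)\simeq G_k(r)$. For $p>1$ the modification acquires further moduli governed by the normal bundle $\cO_C(C)\cong\cO_{\PP^1}(-p)$, namely by $H^1(\PP^1,\cO(-p))\cong\C^{p-1}$ in each of the $k(r-k)$ off-diagonal modification directions; these assemble into a vector bundle over $G_k(r)$ of the total dimension $2rN=pk(r-k)$ dictated by Proposition~\ref{moduli}. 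Since a vector bundle deformation-retracts onto its zero section, $\M^p(r,k,N)$ has the homotopy type of the compact Grassmannian $G_k(r)$. Making this identification precise — tracking the extension classes and the transition data of the bundle — is the second place where genuine computation is required.
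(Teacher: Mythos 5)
The paper does not actually prove this proposition: it is stated as a corollary of the ADHM analysis in \cite{Rava}, with no argument given. Your localization/double-dual route is therefore a genuinely different, more self-contained attempt, and much of it is sound: the identity $c_2(\E)=n-\frac{r-1}{2r}pk^2$ (via $C^2=-p$) settles the integrality claim; the balanced-weight fixed point with an extra length-$(n-N)$ invariant cycle at $p_1$ settles sufficiency; the minimization of \eqref{nfixpo} over integer weights summing to $k$ gives the bound \emph{once a fixed point is known to exist}; and the bootstrap through $0\to\E\to\E^{\ast\ast}\to Q\to 0$ correctly yields (ii). The genuine gap is exactly the step you flag and then defer: the existence of a $T$-fixed point in every nonempty $\M^p(r,k,n)$. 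This cannot be dispatched by ``the valuative criterion applied to the flat limit'': the moduli space is only quasi-projective, and a torus action on a quasi-projective variety can be fixed-point free ($\C^\ast$ acting on itself by multiplication), so one must prove that the limit of a one-parameter orbit of framed sheaves stays inside the moduli space. The known mechanisms are the properness of the morphism to the Uhlenbeck-type space (this is what the Remark on $\M_0^p(r,k,n)$ in Section \ref{ppol} invokes, citing \cite{BMT}) or the ADHM quotient presentation of \cite{Rava} --- i.e., at its crux your proof needs the same external input the paper cites, and as written the ``only if'' half of (i) is not established.

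There is a second, sharper problem in (iii): what you prove is not what the proposition asserts. Your elementary-modification picture produces over $G_k(r)$ a bundle with fibre $H^1(\PP^1,\cO_{\PP^1}(-p))^{\oplus k(r-k)}$, i.e.\ of rank $(p-1)k(r-k)$, whereas the statement says rank $(p-1)$; you should have flagged this discrepancy rather than silently proving a different claim. In fact your number is the one forced by the rest of the paper: Proposition \ref{moduli} gives $\dim\M^p(r,k,N)=2rN=pk(r-k)=\dim G_k(r)+(p-1)k(r-k)$, and the same count falls out of the weight decomposition at the balanced fixed point, where each of the $k(r-k)$ ordered pairs with $k_\alpha=0$, $k_\beta=1$ contributes $\Ext^1(\cO,\cO(C-C_\infty))\simeq H^1(\FF_p,\cO(-pF))\simeq\C^{p-1}$. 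So ``rank $(p-1)$'' can be literally correct only when $k(r-k)=1$ (e.g.\ $r=2$); the statement as printed appears to contain an error, and any complete proof must resolve this tension explicitly instead of ignoring it. Finally, even granting the corrected rank, your (iii) remains a sketch: the assertion that every minimal-discriminant framed bundle restricts to $C$ with the balanced splitting type, and the assembly of the modification data into an algebraic vector bundle over $G_k(r)$, are stated but not proved, as you yourself concede.
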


\bigskip\section{Poincar\'e polynomial} \label{ppol}
In this section we determine the weight decomposition of the toric action on the tangent space to the moduli space at the fixed points,
and use this information to compute the Poincar\'e polynomial of the moduli spaces $\M^p(r,k,n)$.
 
 \subsection{Defining the Poincar\'e polynomial} Let $X$ be a space whose cohomology with rational coefficients is finite-dimensional. One defines the Poincar\'e polynomial of $X$ as
 $$P_t(X) = \sum_{n\ge 0} (\dim H^n(X,\mathbb Q))\,t^n\,.$$
\subsection{Sheaves on stacky Hirzebruch surfaces}
Actually our computations also make sense for $c_1(\E)=kC$ with $k=m/p$ for integer $m$, and $p\ge 2$.
This can be justified by considering a ``stacky compactification''  of    $X_p$;
instead of adding the divisor $C_\infty$, we add 
$\tilde C_\infty\simeq C_\infty/\mathbb Z_p$. One obtains a Deligne-Mumford stack $\mathcal X_p$,   whose so-called coarse space   may be identified with the Hirzebruch surface $\FF_p$, 
and one has a 
a morphism $\pi\colon\mathcal X_p\to \FF_p$.
Since
the push-forward functor $\pi_* \colon \text{Coh}(\mathcal X_p) \to \text{Coh}(\mathcal \FF_p) $ is exact
\cite{av},
one has  an isomorphism 
\be
H^i(\mathcal X_p, \F) \simeq  H^i(\FF_p, \pi_\ast \F) 
\label{iso}
\ee
for any coherent sheaf $\F\in \text{Coh}(\mathcal X_p)$ and all $i$.  Owing to this basic fact, we can reduce all our computations to the coarse
space $\FF_p$, by appropriately taking into account the push-forward of the relevant sheaves. 
The theory developed in  \cite{famani} allows one to prove the following Lemma.
\begin{lemma}
The Picard lattice of $\mathcal X_p$ is freely generated over $\Z$ by the divisor
$\tilde C_\infty$ and the inverse image $\tilde F=\pi^{-1}(F)$. Moreover one has 
\begin{equation}
\label{pushf}\begin{array}{rcl}
\pi(p\,\tilde C_\infty) &=& C_\infty \\[3pt]
\pi(\tilde F) &=& F \, .
\end{array}\end{equation}
\end{lemma}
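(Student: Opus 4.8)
The plan is to realise $\mathcal X_p$ as a smooth toric Deligne--Mumford stack and to extract its Picard group from the stacky fan, following the description of the class group of such stacks in \cite{famani}. First I would write down the stacky fan. Because $\mathcal X_p$ is the minimal resolution of $[\PP^2/\Z_p]$ with coarse space $\FF_p$, its underlying fan in $N=\Z^2$ is precisely the fan of $\FF_p$: four rays with primitive generators $v_1=(1,0)$, $v_2=(0,1)$, $v_3=(-1,0)$, $v_4=(p,-1)$, where the ray through $v_3$ carries the divisor $C_\infty$ (self-intersection $+p$) and the rays through $v_2,v_4$ carry the fibres. The sole effect of the generic $\Z_p$-stabiliser along $\tilde C_\infty$ is to make the generator over the $v_3$-ray non-primitive, so that the stacky map $\beta\colon\Z^4\to N$ sends $e_3\mapsto p\,v_3$ and $e_i\mapsto v_i$ for $i\neq 3$; this is exactly the toric model of the $p$-th root stack of $\FF_p$ along $C_\infty$.

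Next I would feed this datum into the class-group sequence of \cite{famani},
\[
0\longrightarrow M\xrightarrow{\ \beta^\vee\ }\Z^4\longrightarrow \operatorname{Cl}(\mathcal X_p)\longrightarrow 0,
\qquad \beta^\vee(m)=\bigl(\langle m,\beta(e_i)\rangle\bigr)_{i=1}^{4},
\]
which also computes $\operatorname{Pic}(\mathcal X_p)=\operatorname{Cl}(\mathcal X_p)$ since the stack is smooth. Evaluating on the two generators of $M=\Z^2$ gives the relations $D_2-D_4\sim 0$ and $D_1-p\,D_3+p\,D_4\sim 0$ among the four toric boundary classes. The $2\times2$ minor of the relation matrix on the first two coordinates equals $1$, so the image of $\beta^\vee$ is saturated and $\operatorname{Pic}(\mathcal X_p)$ is free of rank $2$; eliminating $D_1$ and $D_4$ leaves the free basis $\{\,\tilde C_\infty=D_3,\ \tilde F=D_2\,\}$, which proves the first assertion.

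For the push-forward formulae I would use that a non-primitive generator $\beta(e_i)=c_i v_i$ forces $\pi^\ast D_i^{\FF_p}=c_i\,D_i^{\mathcal X_p}$ on pullback, so here $\pi^\ast C_\infty=p\,\tilde C_\infty$ and $\pi^\ast F=\tilde F$; the quickest check of the multiplicity is the local model $[\mathbb A^1/\mu_p]\to\mathbb A^1$, where the coordinate vanishes to order $p$ on the stacky origin. Since $\pi$ is the coarse moduli morphism, it is an isomorphism over coarse spaces, so the projection formula gives $\pi_\ast\pi^\ast=\mathrm{id}$ on $\operatorname{Pic}(\FF_p)$, and applying it yields $\pi_\ast(p\,\tilde C_\infty)=C_\infty$ and $\pi_\ast(\tilde F)=F$, i.e.\ \eqref{pushf}. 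The step demanding the most care is pinning down the stacky fan: one must justify that the factor $p$ attaches to the ray of $C_\infty$ and nowhere else, and that the cokernel is torsion-free --- the local model $[\mathbb A^1/\mu_p]$ already produces $\Z/p$, and it is precisely the fibre rays (the unimodular minor above) that make the global $\operatorname{Pic}(\mathcal X_p)$ free. Everything else is the routine linear algebra of the relation matrix together with the projection formula.
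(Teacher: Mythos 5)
Your proof is correct and takes essentially the approach the paper intends: the paper offers no argument beyond invoking the theory of \cite{famani}, and your computation --- identifying $\mathcal X_p$ as the $p$-th root stack of $\FF_p$ along $C_\infty$ via the stacky fan with $\beta(e_3)=p\,v_3$, then reading off $\operatorname{Pic}(\mathcal X_p)$ as the cokernel of $\beta^\vee$ --- is precisely that theory carried out, with the relation matrix and unimodular minor handled correctly. Your projection-formula derivation of \eqref{pushf} from $\pi^\ast C_\infty=p\,\tilde C_\infty$ and $\pi^\ast F=\tilde F$ is also sound and matches the root-stack push-forward formulas of \cite{borne} that the paper cites for this purpose.
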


Let $\widetilde\M^p(r,k,n)$ be the moduli space of torsion-free rank $r$ sheaves $\E$ on $\mathcal X_p$, with 
$c_1(\tilde \E)=k C$ and
discriminant $n$, that are framed on $\tilde C_\infty$ to the sheaf $\mathcal O_{\tilde C_\infty}^{\oplus r}$. 
Here $k=m/p$ for some integer $m$. The fixed points
under the torus action are as in Proposition \ref{fp}, except that in this case the
$k_\alpha$'s are $k_\alpha=m_\alpha/p$, $m_\alpha\in \mathbb{Z}$. 
 
 \begin{remark} We do not have for the moduli spaces $\widetilde\M^p(r,k,n)$ a fully developed theory as we described in the previous section for the spaces $\M^p(r,k,n)$. On the other hand,
 when $k$ takes integer values, in the following treatment $\widetilde\M^p(r,k,n)$ 
 may be replaced by $\M^p(r,k,n)$, so that all results are rigourous. Therefore, the results for non-integer values of $k$ are to be regarded as heuristic, in the sense
 that they need a more rigourous proof, involving the construction of moduli spaces
 of framed sheaves on the stacks $\mathcal X_p$.
 \end{remark}

\subsection{Weight decomposition of the tangent spaces} 
We compute here the weights of the action of the torus $T$ on the irreducible subspaces of the tangent spaces to $\widetilde\M^p(r,k,n)$ 
at the fixed points $(\E,\phi)$ of the action. According to the decomposition \eqref{fixpo},  the tangent space $T_{(E,\phi)} \widetilde\M^p(r,k,n) \simeq \Ext^1(\E,
\E(-\tilde C_\infty))$ splits as 
$$
   \Ext^1(\E, \E(-C_\infty))
   = \bigoplus_{\alpha ,\beta}
        \Ext^1(\cI_\alpha (k_\alpha  C), \cI_\beta(k_\beta C - \tilde C_\infty)).
$$
The factor $\Ext^1(\cI_\alpha (k_\alpha  C), \cI_\beta(k_\beta C - \tilde C_\infty))$
has weight $e_\beta e_\alpha ^{-1}$ under the maximal torus of $Gl(r,\C)$. So we need only
 to describe the weight decomposition with respect to the remaining action of 
 $T^2=\C^\ast\times \C^\ast$. 

From the exact sequence
$
0\to \cI_\alpha  \to \cO \to \cO_{Z_\alpha }\to 0
$
we have in K-theoretic terms
\begin{multline}\label{exts}
\Ext^\bullet(\cI_\alpha (k_\alpha  C),   \cI_\beta(k_\beta C - \tilde C_\infty))  =
\Ext^\bullet(\cO(k_\alpha  C), \cO(k_\beta C - \tilde C_\infty))  \\  
     -   \Ext^\bullet(\cO(k_\alpha  C), \cO_{Z_\beta}(k_\beta C - \tilde C_\infty))     -  
  \Ext^\bullet(\cO_{Z_\alpha }(k_\alpha  C), \cO(k_\beta C - \tilde C_\infty))   \\   
   +   \Ext^\bullet(\cO_{Z_\alpha } (k_\alpha  C),
           \cO_{Z_\beta}(k_\beta C - \tilde C_\infty)).  
\end{multline}
We should note that 
\begin{eqnarray*}
\Ext^0 (\cO(k_\alpha  C), \cO(k_\beta C - \tilde C_\infty)) &\simeq& 
H^0(\mathcal X_p, \cO(- n_{\alpha\beta} C - \tilde C_\infty)) \simeq \nonumber\\
&\simeq& H^0\left(\FF_p, \cO\left(- ([n_{\alpha\beta}]+1) C_\infty + p n_{\alpha\beta} F\right)\right) =
0 ,
\end{eqnarray*}
where $n_{\alpha\beta}=k_\alpha  - k_\beta$ and $[n_{\alpha\beta}]$ is its integer part.
In order to show the above vanishing we used the relation $C=p(\tilde C_\infty - F)$, the isomorphism
(\ref{iso}) and the Lemma \ref{pushf}. More precisely, for our purposes we only need to compute the  
push-forward of line bundles; this is done along the lines of \cite{borne}.
Analogously
$$
\Ext^2 (\cO(k_\alpha  C), \cO(k_\beta C - \tilde C_\infty)) \simeq H^0(\FF_p,\cO (( [n_{\alpha\beta}]+1)C_\infty - p n_{\alpha\beta} F)) = 0\,.
$$
since the divisors $C_\infty$ and $F$ intersect.
We are thus left with the computation of the $\Ext^1$ groups
$$
\Ext^1(\cO(k_\alpha  C),\cO(k_\beta C - \tilde C_\infty)) =
H^1(\FF_p, \cO(-n_{\alpha\beta} C - \tilde C_\infty))
$$
We distinguish three cases according to the values of $[n_{\alpha\beta}]$. In the first case
 ($[n_{\alpha\beta}]=0$) one again easily sees that  
$H^1(\mathcal X_p, \cO( - n_{\alpha\beta} C - \tilde C_\infty))=H^1(\FF_p, \cO(p \{n_{\alpha\beta}\}F - C_\infty))=
0$, where $\{n_{\alpha\beta}\}$ is the fractional part,  $\{n_{\alpha\beta}\} = n_{\alpha\beta}
- [n_{\alpha\beta}]$.
  
In the second case ($[n_{\alpha\beta}]>0$) we get 
\be
H^1(\mathcal X_p, \cO(- n_{\alpha\beta}C - \tilde C_\infty)) \simeq
\oplus_{d=0}^{[n_{\alpha\beta}]-1} H^0(\PP^1, \cO_{\PP^1}(pd + p\{n_{\alpha\beta}\}))
\label{ngt1}
\ee 
We prove this result by induction on $[n_{\alpha\beta}]>0$ using the  exact sequence
\begin{equation}
0\to \cO(-(n_{\alpha\beta}+1) C - \tilde C_\infty) \to \cO (- n_{\alpha\beta}C - \tilde C_\infty) \to
\cO_{C}(-n_{\alpha\beta}C ) \to 0
\label{seqn}
\end{equation}
For $n_{\alpha\beta}=1$ one readily obtains \eqref{ngt1}. On the other hand from
\eqref{seqn} we get
\begin{multline*} 0 \to H^0 (\PP^1, \cO(n_{\alpha\beta}p)) \to H^1(\mathcal X_p, \cO(-(n_{\alpha\beta}+1)C - \tilde C_\infty)) \\
\to H^1(\mathcal X_p, \cO(-n_{\alpha\beta}C - \tilde C_\infty)) \nn  
  \to H^1(\PP^1, \cO(n_{\alpha\beta}p))   = 0
\end{multline*} 
and by the inductive hypothesis  
\begin{multline*}
 0 \to H^0 (\PP^1, \cO(n_{\alpha\beta}p)) \to H^1(\mathcal X_p, \cO(-(n_{\alpha\beta}+1)C - \tilde C_\infty)) \\
\to \oplus_{d=0}^{[n_{\alpha\beta}]-1} H^0(\PP^1, \cO_{\PP^1}(pd+ p\{n_{\alpha\beta}\})) \to 0
\end{multline*} 
so that \eqref{ngt1} is proved. 
Since $H^0(\PP^1, \cO_{\PP^1}(pd+ p\{n_{\alpha\beta}\}))$ is the space of homogeneous
polynomials  of degree $pd+p\{n_{\alpha\beta}\}$ in two variables, it equals  $\sum_{i=0}^{pd+p\{n_{\alpha\beta}\}}
t_1^{-i} t_2^{-pd+p\{n_{\alpha\beta}\}+i}$ in the representation ring of $T^2$.
Finally, we get the weights of this summand of the tangent space as
\begin{equation}
L_{\alpha, \beta} (t_1,t_2) 
=  e_\beta\, e_\alpha ^{-1}
\sum_{{i,j\ge 0, i+j-pn_{\alpha\beta} \equiv 0 \ {\rm mod} p, \ i+j \le p(n_{\alpha\beta}-1)}}
      t_1^{-i} t_2^{-j} 
\label{Lmag0}
\end{equation}

For the third case ($n_{\alpha\beta}<0$) we have analogously
\begin{equation}
L_{\alpha, \beta} (t_1,t_2) 
 =   e_\beta\,e^{-1}_\alpha
\sum_{{i,j\ge 0,\ i+j+2+pn_{\alpha\beta}\equiv 0 \ {\rm mod} p, \ i+j \le - pn _{\alpha\beta}- 2}}
      t_1^{i+1} t_2^{j+1} 
\label{Lmin0}
\end{equation}

These terms contribute to the weight decomposition of $T_{(E,\phi)} \widetilde\M_p(r,k,n)$
at the fixed points together with  the remaining terms in \eqref{exts}, which are
essentialy the same as in $\PP^2$ case modulo some rescalings of the arguments
(in a sense, we   look at $X_p$ as the resolution of singularities of $\C^2/\Z_p$,
and rescale the arguments to achieve $\Z_p$-equivariance).
In this way we get
\begin{multline} 
   T_{(E,\phi)} \widetilde\M^p(r,k,n)
   = \\ \sum_{\alpha ,\beta=1}^r \left(L_{\alpha ,\beta}(t_1,t_2)
     + t_1^{p(k_\beta - k_\alpha )} N_{\alpha ,\beta}^{\vec{Y}_1}(t_1^p,t_2/t_1)
     + t_2^{p(k_\beta - k_\alpha )}
     N_{\alpha ,\beta}^{\vec{Y}_2}(t_1/t_2,t_2^p)\right),\label{tangent_space}
\end{multline} 
where $L_{\alpha ,\beta}(t_1,t_2)$ is given by (\ref{Lmag0})
and (\ref{Lmin0}), while
 \be
N_{\alpha ,\beta}^{\vec{Y}}(t_1,t_2)=e_\beta e_\alpha ^{-1}\times
\left\{\sum_{s \in Y_\alpha }
\left(t_1^{-l_{Y_\beta}(s)}t_2^{1+a_{Y_\alpha }(s)}\right)+\sum_{s
\in Y_\beta}
\left(t_1^{1+l_{Y_\alpha }(s)}t_2^{-a_{Y_\beta}(s)}\right)\right\}\,,
\label{R4_tangent_space} \ee
a well known expression for the
$\C^2$ case (first introduced in \cite{Flupo}). 
Here $\vec Y$ denotes an $r$-ple  of Young tableaux, while for a given box $s$ in the tableau
$Y_\alpha$, the symbols $a_{Y_\alpha}$ and $l_{Y_\alpha}$ denote the ``arm" and ``leg" of $s$ respectively, that is, the number of boxes above and on the right to $s$.

\subsection{Computing the Poincar\'e polynomial}
We shall closely follow the method presented in \cite{NY-L},
Section  3.4. We choose a one-parameter subgroup $\lambda (t)
$ of the $r+2$ dimensional torus $T$
$$ \lambda (t)=(t^{m_1},
     t^{m_2},t^{n_1},\dots,t^{n_r}),
$$ by specifying generic weights such that
 $$ m_1=m_2\gg n_1>n_2\cdots>n_r>0 \,.$$
We have now a different fixed locus; as far as the action on the surface
$\FF_p$ is concerned, the entire ``exceptional line'' $C$ is pointwise  fixed
under this action. Accordingly, the fixed points in $\M^(r,k,n)$ 
still have the form \eqref{fixpo},  with $Z_\alpha$ fixed under the action of the diagonal subgroup $\Delta$ of $\C^\ast\times\C^\ast$, which means that the points in the supports of the 0-cycles
$Z_\alpha$ are arbitrary points in $C$. 
Each component of the fixed point set is parametrized by an $r$-ple of pairs
$((k_1,Y_1),\dots,(k_r,Y_r))$ with
$$n=\sum_{\alpha=1}^r\vert Y_\alpha\vert+\frac{p}{2r}\sum_{\alpha<\beta}(k_\alpha-k_\beta)^2
\quad\text{and}\quad \sum_{\alpha=1}^r k_\alpha=k\,.$$

\begin{remark} This action allows one to prove that the space $\M^p(r,k,n)$
has the homotopy type of a compact space, thus generalizing point (iii) of Proposition \ref{boundprop}.
This is needed to fully justify our computation of the Poincar\'e polynomial of $\M^p(r,k,n)$
which uses, albeit indirectly, Morse theory.
We may define a set 
$$\M_0^p(r,k,n) =
 {\lower8pt\hbox{\Huge$\amalg$} \atop {\mbox{\tiny$0 \le \ell \le n - N$}}}
\M^p_{\mbox{\small lf}}(r,k,n-\ell)
\times \operatorname{Sym}^\ell X_p$$
(where $\M^p_{\mbox{\footnotesize lf}}(r,k,n)$ is the open subscheme of $\M^p(r,k,n)$
given by the locally free points)
and a map $\gamma\colon \M^p(r,k,n) \to \M_0^p(r,k,n) $
by letting
$$ \gamma((\E,\phi) )= \left((\E^{\ast\ast},\phi),\,\operatorname{supp}(\E^{\ast\ast}/\E)\right.)\,.$$
The space $\M_0^p(r,k,n)$ may be given a schematic structure by looking at it as
a Uhlenbeck-Donaldson partial compactification of $ \M^p_{\mbox{\footnotesize lf}}(r,k,n)$, and the
morphism $\gamma$ is then projective (hence proper) \cite{BMT,HL-book}. There is a natural action
of $\C^\ast$ on $ \M_0^p(r,k,n) $, which is compatible with the action
on $ \M^p(r,k,n)$. The ``deepest'' stratum in $\M_0^p(r,k,n)$ is isomorphic to
$\M^p(r,k,N)\times\operatorname{Sym}^{n-N}X_p$, and its intersection $Z$ with the
fixed locus of the $\C^\ast$ action is isomorphic to a product of symmetric products of copies of  the exceptional
curve $C$ (hence, is compact).
An analysis analogous to the one developed in \cite{NakaBook,NY-I} shows that 
$\gamma^{-1}(Z)$ has the same homotopy type of $\M^p(r,k,n)$.
\end{remark}

To link with the previous construction, we should notice that each fixed point  $\bar\E$  of the
$\Delta\times T^r$ action corresponding to an $r$-ple $((k_1,Y_1),\dots,(k_r,Y_r))$ 
determines a fixed point $((k_1,\emptyset,Y_1),\dots,(k_r,\emptyset,Y_r))$ of the
$T$-action which lies in the same component of the fixed locus as $\bar\E$. Since the $\Delta\times T^r$-module structure of the tangent space $T_{(\E,\phi)}\widetilde{\M}^p(r,k,n)$ at a fixed point of $\Delta\times T^r$
does not  change if we move the fixed point in its component, we may choose   $((k_1,\emptyset,Y_1),\dots,(k_r,\emptyset,Y_r))$. In this case
(\ref{tangent_space})
reduces to 
\be
   T_{(E,\phi)} \widetilde{\M}_p(r,k,n)
   = \sum_{\alpha ,\beta=1}^r \left(L_{\alpha ,\beta}(t_1,t_1)+
   t_1^{p(k_\beta - k_\alpha )} N_{\alpha ,\beta}^{\vec{Y}}(1,t_1^p)\right), \label{reduced_tangent_space}
\ee By (\ref{R4_tangent_space}) we have 
$$
N_{\alpha ,\beta}^{\vec{Y}}(1,t_1^p)=e_\beta e_\alpha ^{-1}\times
\left(\sum_{s \in Y_\alpha } t_1^{p(1+a_{Y_\alpha }(s))}+\sum_{s \in
Y_\beta} t_1^{-p\,a_{Y_\beta}(s)}\right). 
$$
 Our task now is to
compute the index of the critical points, that is,  the number of terms
in (\ref{reduced_tangent_space}) for which one of the following possibilities
holds: 
\begin{enumerate}
\item the weight of $t_1$ is negative,
\item the weight of $t_1$ is zero and the weight of $e_1$ is negative, 
\item the weights of $t_1$, $e_1$ are zero and the weight of
$e_2$ is negative, 
\item the weights of $t_1$, $e_1$, $e_2$ are zero and weight of $e_3$ is
negative,  
\item[ ] ... ...
\item[($r+1$)] the weights of $t_1$, $e_1$,...,$e_{r-1}$ are zero and the weight
of $e_r$ is negative.
\end{enumerate} 
The contribution of the    diagonal ($\alpha =\beta$)
terms of (\ref{reduced_tangent_space})
turns out to be
$$
\sum_{\alpha =1}^r (\mid Y_\alpha \mid -l(Y_\alpha )).
$$
The nondiagonal ($\alpha \neq\beta$)  terms  can be
rewritten as 
\begin{multline}
\label{nondiagonal_tangent_space}
   \sum_{\alpha <\beta}
   \left(\phantom{\Biggl(}L_{\alpha ,\beta}(t_1,t_1)+L_{\beta,\alpha }(t_1,t_1) \right.
   \\ 
   +   \sum_{s \in Y_\alpha }\left(\frac{e_\beta}{e_\alpha }\, t_1^{p(1+a_{Y_\alpha }(s)+k_\beta -
k_\alpha )}+\frac{e_\alpha }{e_\beta}\,t_1^{p\,(-a_{Y_\alpha }(s)-k_\beta
+ k_\alpha )}\right)  
\\  \left.
+\sum_{s \in
Y_\beta}\left(\frac{e_\beta}{e_\alpha }
\,t_1^{p(-a_{Y_\beta}(s)+k_\beta -
k_\alpha )}+\frac{e_\alpha }{e_\beta}\,t_1^{p\,(1+a_{Y_\beta}(s)+k_\alpha 
- k_\beta)}\right)\right),
\end{multline}

We compute now the $L$ terms in these expressions.  Due to the
formulas (\ref{Lmag0}), (\ref{Lmin0}),  if $k_\alpha  -k_\beta \ge 0$
only the term $L_{\alpha ,\beta}(t_1,t_1)$ contributes to the
index. This contribution is easy to count:
$$
 \sum_{{i,j\ge 0,
i+j-pn_{\alpha\beta}\equiv 0 \ {\rm mod} p, \ i+j \le
p(n_{\alpha\beta}-1)}}1
=\frac{1}{2}[n_{\alpha\beta}]\left(p [n_{\alpha\beta}] +2 - p\right) + p[n_{\alpha\beta}]\{n_{\alpha\beta}\}.
$$
If, instead, $k_\alpha  -k_\beta < 0$ there is a contribution from
the $L_{\beta ,\alpha }(t_1,t_1)$ term which is equal to 
$$
\sum_{{i,j\ge 0, \ i+j >0 , \ i+j+pn_{\beta\alpha}\equiv 0 \ {\rm mod} p, \ i+j \le
p(n_{\beta\alpha}-1)}}1
=\frac{1}{2}[n_{\beta\alpha}]\left(p [n_{\beta\alpha}] +2 - p\right) + p[n_{\beta\alpha}]\{n_{\beta\alpha}\}
-\delta_{p\{n_{\beta\alpha}\},0}.
$$
Summarizing, the contribution of the $L$  terms to the index is
\be\label{l_prime}
 l^\prime_{\alpha ,\beta}=\left\{
\begin{array}{ll}\displaystyle
\frac{1}{2}[n_{\alpha\beta}]\left(p [n_{\alpha\beta}] +2 - p\right) + p[n_{\alpha\beta}]\{n_{\alpha\beta}\}
\quad & \text{if}\quad  n_{\alpha\beta}\ge 0,
\\[10pt]
\displaystyle 
\frac{1}{2}[n_{\beta\alpha}]\left(p [n_{\beta\alpha}] +2 - p\right) + p[n_{\beta\alpha}]\{n_{\beta\alpha}\}
-\delta_{p\{n_{\beta\alpha}\},0}
\quad & \text{otherwise.} \end{array}
\right. 
\ee

The only operation to be yet performed is  the sum over the boxes of $Y_\alpha $ in
(\ref{nondiagonal_tangent_space}). A careful analysis shows that the contribution is $\mid
Y_\alpha  \mid +\mid Y_\beta \mid -n^\prime_{\alpha ,\beta}$, where
\be\label{n_prime}
 n^\prime_{\alpha ,\beta}=\left\{ \begin{array}{l} 
\text{$\sharp$ of columns of $Y_\alpha$ that are longer than $k_\alpha  - k_\beta$ if  $k_\alpha  - k_\beta\ge
0$,} \\
\text{$\sharp$  of columns of  $Y_\beta$ that are longer than  
$k_\beta - k_\alpha -1$ otherwise. }\end{array}
\right.
\ee
With   this information we may compute the desired Poincar\'e polynomial.
\begin{thm} \label{poipol} The Poincar\'e polynomial of $\widetilde\M^p(r,k,n)$ is
$$
P_t(\widetilde\M^p(r,k,n))=\sum_{\text{\rm fixed}\atop\text{\rm points}} \prod_{\alpha =1}^r t^{2(\mid
Y_\alpha \mid -l(Y_\alpha ))} \prod_{i=1}^\infty
\frac{t^{2 (m^{(\alpha)}_i+1)}-1}{t^2-1}
\prod_{\alpha <\beta}t^{2(l^\prime_{\alpha ,\beta}+\mid Y_\alpha 
\mid +\mid Y_\beta \mid -n^\prime_{\alpha ,\beta})}\,.
$$ \label{thm}
\qed \end{thm}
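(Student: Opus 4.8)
The plan is to compute the Poincaré polynomial by Morse theory applied to the one‑parameter subgroup $\lambda(t)$ fixed in the preceding subsection. The strategy rests on the standard Białynicki‑Birula / Morse‑theoretic principle: since $\widetilde\M^p(r,k,n)$ has the homotopy type of a compact space (the preceding Remark reduces us to the compact set $\gamma^{-1}(Z)$), the moment map associated with $\lambda$ is a perfect Morse–Bott function, and the Poincaré polynomial is obtained as
\[
P_t\big(\widetilde\M^p(r,k,n)\big)=\sum_{\text{components }F}t^{2\,\mathrm{ind}(F)}\,P_t(F),
\]
where $\mathrm{ind}(F)$ is the number of negative weights of $\lambda$ acting on the normal bundle (equivalently, on the tangent space modulo the tangent directions along $F$), and $P_t(F)$ is the Poincaré polynomial of the component $F$ itself. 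Because $\Ext^2=0$ the spaces are smooth, so no obstruction terms enter and the Morse function is genuinely perfect.

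First I would identify each component $F$ of the fixed locus with the data $((k_1,Y_1),\dots,(k_r,Y_r))$ and its intrinsic topology. As explained in the previous subsection, moving a fixed point within its component amounts to moving the support of the $0$-cycles $Z_\alpha$ along the exceptional curve $C$, so $F$ is a product of symmetric products $\operatorname{Sym}^{\bullet}C\cong\operatorname{Sym}^{\bullet}\PP^1$. The Poincaré polynomial of $\operatorname{Sym}^{m}\PP^1\cong\PP^m$ is $\sum_{j=0}^{m}t^{2j}=\frac{t^{2(m+1)}-1}{t^2-1}$, which accounts precisely for the middle product $\prod_{i}\frac{t^{2(m^{(\alpha)}_i+1)}-1}{t^2-1}$ in the statement; here the $m^{(\alpha)}_i$ record the multiplicities (the lengths of the columns, or equivalently the parts of the partitions encoding how the length-$\ell$ cycle distributes among boxes), so that $\sum_\alpha(|Y_\alpha|-l(Y_\alpha))$ is the already‑computed contribution of the diagonal directions to the index while the remaining box‑counting is absorbed into $P_t(F)$.

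Next I would assemble the Morse index $\mathrm{ind}(F)$ from the weight decomposition \eqref{reduced_tangent_space} of the tangent space, counting exactly those weights that are negative under $\lambda$ according to the lexicographic criterion (i) through ($r+1$) listed above. The diagonal $\alpha=\beta$ terms contribute $\sum_\alpha(|Y_\alpha|-l(Y_\alpha))$; the off‑diagonal $\alpha<\beta$ terms split into the $L$-part, whose negative-weight count is exactly $l'_{\alpha,\beta}$ from \eqref{l_prime}, and the $N^{\vec Y}$-part, whose count is $|Y_\alpha|+|Y_\beta|-n'_{\alpha,\beta}$ from \eqref{n_prime}. Multiplying $t^{2\,\mathrm{ind}(F)}$ by $P_t(F)$ and summing over all admissible collections subject to $\sum_\alpha k_\alpha=k$ and the constraint on $n$ then yields the displayed formula verbatim.

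\emph{The hard part} will be the careful weight‑counting bookkeeping in the off‑diagonal terms — establishing that the lexicographic tie‑breaking rules (i)–($r+1$) single out precisely the integers $l'_{\alpha,\beta}$ and $n'_{\alpha,\beta}$, including the subtle boundary corrections such as the Kronecker‑delta term $-\delta_{p\{n_{\beta\alpha}\},0}$ that arises from the $i+j>0$ restriction in the $n_{\alpha\beta}<0$ case, and the identification of $n'_{\alpha,\beta}$ with a count of sufficiently long columns. I would verify these by a direct monomial‑by‑monomial inspection of \eqref{Lmag0}, \eqref{Lmin0} and \eqref{R4_tangent_space} under the specialization $t_1=t_2$ dictated by the diagonal subgroup $\Delta$, checking in particular that the arm/leg statistics $a_{Y}$, $l_{Y}$ translate correctly into column‑length conditions. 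The only other point requiring care is the justification of perfectness of the Morse stratification on a noncompact but compactly‑homotopic space, which is supplied by the Uhlenbeck–Donaldson partial compactification and the properness of $\gamma$ invoked in the Remark.
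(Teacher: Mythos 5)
Your proposal is correct and follows essentially the same route as the paper: the same one-parameter subgroup $\lambda$, the same Morse--Bott principle $P_t=\sum_F t^{2\,\mathrm{ind}(F)}P_t(F)$ justified by the compactness-up-to-homotopy argument of the preceding Remark, the same index bookkeeping (diagonal terms giving $\sum_\alpha(|Y_\alpha|-l(Y_\alpha))$, off-diagonal terms giving $l'_{\alpha,\beta}$ and $|Y_\alpha|+|Y_\beta|-n'_{\alpha,\beta}$ via \eqref{l_prime} and \eqref{n_prime}). If anything, you spell out more explicitly than the paper does that the middle factor $\prod_i\bigl(t^{2(m_i^{(\alpha)}+1)}-1\bigr)/(t^2-1)$ is the Poincar\'e polynomial of the fixed component itself, a product of symmetric products $\operatorname{Sym}^{m_i^{(\alpha)}}C\cong\PP^{m_i^{(\alpha)}}$ — which is exactly what the paper's formula presupposes.
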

Here  $m_i^{(\alpha)} $ is the number 
of columns in $Y_\alpha$ that have length $i$.

\begin{corol}
The generating function for the Euler characteristics of the moduli spaces $\widetilde\M^p(r,k,n)$
is
$$
\sum_{k,n} P_{-1}(\widetilde\M^p(r,k,n)) \, q^{n+\frac{pk^2}{2r}} z^k = \left(\frac{\theta_3(\frac{v}{p}|\frac{\tau}{p})}
{\hat\eta(\tau)^2}\right)^r
$$
with $q={\rm e}^{2\pi i \tau}$ and  $z=e^{2\pi i v}$.
\end{corol}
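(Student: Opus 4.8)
The plan is to specialize Theorem \ref{poipol} to $t=-1$. Since $P_t$ involves only even powers of $t$, the value $P_{-1}(\widetilde\M^p(r,k,n))=\sum_n(-1)^n\dim H^n=\chi(\widetilde\M^p(r,k,n))$ is precisely the topological Euler characteristic, so the left-hand side of the Corollary is obtained by setting $t=-1$ in the fixed-point sum of Theorem \ref{poipol} and organizing the result as a generating series in $q^{n+pk^2/(2r)}z^k$.

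First I would carry out the $t\to-1$ evaluation factor by factor in the summand. Each prefactor of the form $t^{2(\cdots)}$ becomes $(-1)^{2(\cdots)}$, which equals $1$ \emph{provided the exponent in parentheses is an integer}; this is where the only real care is needed. For the diagonal prefactors $t^{2(|Y_\alpha|-l(Y_\alpha))}$ integrality is clear, but for the off-diagonal ones $t^{2(l'_{\alpha,\beta}+|Y_\alpha|+|Y_\beta|-n'_{\alpha,\beta})}$ one must check it survives the stacky case $k_\alpha=m_\alpha/p$: writing $n_{\alpha\beta}=(m_\alpha-m_\beta)/p$ one has $p\{n_{\alpha\beta}\}\in\Z$, while $[n_{\alpha\beta}](p[n_{\alpha\beta}]+2-p)=p[n_{\alpha\beta}]([n_{\alpha\beta}]-1)+2[n_{\alpha\beta}]$ is even, so $l'_{\alpha,\beta}\in\Z$ by \eqref{l_prime}, and likewise for $n'_{\alpha,\beta}$ via \eqref{n_prime}. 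The only genuinely $0/0$ factor is $\frac{t^{2(m_i^{(\alpha)}+1)}-1}{t^2-1}=1+t^2+\cdots+t^{2m_i^{(\alpha)}}$, a sum of $m_i^{(\alpha)}+1$ terms, each tending to $1$. Hence
$$P_{-1}(\widetilde\M^p(r,k,n))=\sum_{\text{fixed points}}\ \prod_{\alpha=1}^r\prod_{i=1}^\infty\bigl(m_i^{(\alpha)}+1\bigr).$$

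Next I would decouple the colours $\alpha$ in the exponent of $q$. Using $\sum_{\alpha<\beta}(k_\alpha-k_\beta)^2=r\sum_\alpha k_\alpha^2-k^2$ together with $\sum_\alpha k_\alpha=k$, the weight simplifies to
$$n+\frac{pk^2}{2r}=\sum_{\alpha=1}^r\Bigl(|Y_\alpha|+\tfrac p2\,k_\alpha^2\Bigr).$$
Since the fixed points are indexed by \emph{free} $r$-tuples $((k_1,Y_1),\dots,(k_r,Y_r))$ with $k_\alpha\in\tfrac1p\Z$, and $z^k=\prod_\alpha z^{k_\alpha}$ while the Euler-characteristic weight $\prod_\alpha\prod_i(m_i^{(\alpha)}+1)$ already factorizes, the whole generating series collapses to an $r$-fold product of one single-colour sum,
$$\left(\ \sum_{k'\in\frac1p\Z}\ \sum_{Y}\ \prod_{i=1}^\infty\bigl(m_i(Y)+1\bigr)\,q^{\,|Y|+\frac p2 k'^2}\,z^{k'}\right)^{\!r}.$$

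Finally I would evaluate the single-colour factor as a product of two standard $q$-series. Encoding a partition $Y$ by its column-length multiplicities $(m_1,m_2,\dots)$ so that $|Y|=\sum_i i\,m_i$, the partition sum factors over $i$, and $\sum_{m\ge0}(m+1)x^m=(1-x)^{-2}$ gives
$$\sum_Y\ \prod_{i=1}^\infty\bigl(m_i(Y)+1\bigr)\,q^{|Y|}=\prod_{i=1}^\infty\frac1{(1-q^i)^2}=\frac1{\hat\eta(\tau)^2}.$$
Writing $k'=m'/p$ with $m'\in\Z$, the remaining factor is $\sum_{m'\in\Z}q^{m'^2/(2p)}z^{m'/p}$, which with $q=e^{2\pi i\tau}$ and $z=e^{2\pi i v}$ equals $\theta_3(\frac{v}{p}\,|\,\frac{\tau}{p})$; multiplying and raising to the $r$-th power yields the claimed formula. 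I expect the two delicate points to be the integrality check that makes the off-diagonal prefactors disappear at $t=-1$ (and hence permits the factorization across colours) and the bookkeeping needed to match the normalization conventions for $\theta_3$ and $\hat\eta$; everything else is a rearrangement of absolutely convergent $q$-series.
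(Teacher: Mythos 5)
Your proposal is correct and follows essentially the same route as the paper's (very terse) proof: specialize Theorem \ref{poipol} at $t=-1$ and match the resulting $q$-series against the stated expansions of $\theta_3$ and $\hat\eta$. The details you supply — the integrality of the Morse-index exponents in the stacky case, the factor $\frac{t^{2(m_i^{(\alpha)}+1)}-1}{t^2-1}\to m_i^{(\alpha)}+1$, the decoupling of colours via $n+\frac{pk^2}{2r}=\sum_\alpha\bigl(|Y_\alpha|+\frac{p}{2}k_\alpha^2\bigr)$, and the evaluation $\sum_Y\prod_i(m_i+1)q^{|Y|}=\prod_i(1-q^i)^{-2}$ — are exactly the steps the paper leaves implicit, and they are all carried out correctly.
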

\begin{proof}
It just follows by setting $t=-1$ in the above expression for $P_t(\widetilde\M^p(r,k,n))$ and the 
standard formulas for the (quasi)-modular functions
\begin{eqnarray}
\theta_3(v|\tau) &=& \sum_{n\in\mathbb{Z}} q^{\frac{1}{2}n^2} {\rm e}^{2\pi i v n} \nonumber \\
\hat\eta(\tau) &=& \prod_{l=1}^{\infty} (1 - q^l)\,. \nonumber 
\end{eqnarray}
\end{proof}

\bigskip
\section{Some comparisons and consequences} \label{checks}
\subsection{Irreducibility of the moduli space}
First, we note that our computation of the Poincar\'e polynomial of the spaces
$\M^p(r,k,n)$ allows us to conclude that these spaces are irreducible.
\begin{thm} For all $p\ge 1$, and all values of the parametes $r$, $k$, $n$, the moduli space $\M^p(r,k,n)$ of framed sheaves on the Hirzebruch surface $\FF_p$ is irreducible.
\end{thm}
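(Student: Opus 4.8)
The plan is to deduce irreducibility of $\M^p(r,k,n)$ directly from the explicit form of its Poincaré polynomial computed in Theorem \ref{poipol}, combined with the smoothness established in Proposition \ref{moduli}. The key observation is that a smooth variety is irreducible if and only if it is connected, and for a smooth (hence, in particular, a rational homology manifold, so that Poincaré duality applies) variety whose cohomology we know, connectedness is detected by the top-degree cohomology: specifically, $\M^p(r,k,n)$ is connected precisely when the leading coefficient of its Poincaré polynomial equals $1$, since each irreducible component of (complex) dimension $2rn$ contributes one generator to $H^{4rn}(\M^p(r,k,n),\mathbb{Q})$ via its fundamental class. Thus the whole problem reduces to showing that the coefficient of the top power of $t$ in $P_t(\M^p(r,k,n))$ is $1$.

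The first step is to identify which fixed point contributes the top power of $t$. From the formula in Theorem \ref{poipol}, the total exponent is a sum of nonnegative Morse indices over all fixed points, each index being $2$ times an integer built from the data $((k_1,Y_1),\dots,(k_r,Y_r))$. I would argue that for each fixed value of $n$ (and normalized $k$), the total degree $2rn$ is attained, and attained by exactly one fixed point: intuitively, the fixed point whose Morse index is maximal is the one lying in the bottom cell of the Białynicki-Birula decomposition, i.e.\ the attractive fixed point, and in these constructions this is typically unique. Concretely, one checks that among all collections satisfying the constraint $n=\sum_\alpha|Y_\alpha|+\frac{p}{2r}\sum_{\alpha<\beta}(k_\alpha-k_\beta)^2$ together with $\sum_\alpha k_\alpha=k$, there is a single one realizing the maximal value $2rn$ of the exponent appearing in the product of Theorem \ref{poipol}.

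The second step, which I expect to be the main technical obstacle, is the combinatorial verification that this maximum is achieved \emph{only once}. One must expand the three pieces of the exponent—the diagonal term $|Y_\alpha|-l(Y_\alpha)$, the $L$-term contribution $l'_{\alpha,\beta}$ of \eqref{l_prime}, and the off-diagonal box-counting term $|Y_\alpha|+|Y_\beta|-n'_{\alpha,\beta}$ of \eqref{n_prime}—and bound the total by $2rn$, with equality forcing a unique configuration of the partitions $Y_\alpha$ and integers $k_\alpha$. The delicate point is that the index is not monotone in an obvious way in the $Y_\alpha$, because the $n'_{\alpha,\beta}$ correction subtracts column counts and the $l'_{\alpha,\beta}$ term depends on the $k_\alpha-k_\beta$; one has to show these corrections cannot conspire to let two distinct fixed points both reach the top degree. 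A clean way to organize this is to interpret the leading fixed point as the one corresponding to the locally free sheaf in $\M^p(r,k,N)$ tensored with a maximally-spread configuration of points, matching the stratification used in the Remark after Theorem \ref{poipol}.

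Finally, once uniqueness of the top-degree contribution is established, one concludes: the leading coefficient of $P_t(\M^p(r,k,n))$ is $1$, hence $\dim H^{4rn}(\M^p(r,k,n),\mathbb{Q})=1$, hence the smooth variety $\M^p(r,k,n)$ is connected, hence irreducible. The same argument works uniformly for all $p\ge 1$ since the Poincaré polynomial formula is valid in that range, and since for integer $k$ the stacky space $\widetilde\M^p(r,k,n)$ coincides with $\M^p(r,k,n)$, so the rigorous Theorem \ref{poipol} applies verbatim.
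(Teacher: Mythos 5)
There is a genuine gap, and it lies in your very first reduction. You propose to detect connectedness through the \emph{top}-degree cohomology $H^{4rn}(\M^p(r,k,n),\mathbb{Q})$, arguing that each irreducible component contributes a fundamental class there. But $\M^p(r,k,n)$ is only quasi-projective, hence in general noncompact, and for a noncompact smooth connected variety of complex dimension $2rn$ the ordinary top cohomology $H^{4rn}$ \emph{vanishes}: fundamental classes of components live in Borel--Moore homology $H^{BM}_{4rn}$ (equivalently, in compactly supported cohomology $H^{4rn}_c$), which by Poincar\'e duality pairs with $H^0$, not with $H^{4rn}$. So the leading coefficient of $P_t$ in degree $4rn$ detects nothing; in fact it is typically $0$. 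This also breaks your first combinatorial step: the exponents in Theorem \ref{poipol} are Morse indices (dimensions of unstable directions at the fixed points), and for these noncompact spaces no fixed point is fully repelling, so the value $2rn$ is in general not attained by \emph{any} fixed point, let alone by exactly one. A concrete counterexample to your premise: for $r=1$, $n=1$ one has $\M^p(1,k,1)\simeq X_p$, whose Poincar\'e polynomial (see \eqref{same}) is $1+t^2$; the top degree is $2$, not $4rn=4$, and indeed $H^4(X_p,\mathbb{Q})=0$ since $X_p$ retracts onto $\PP^1$.

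The correct argument reads off connectedness from the \emph{other} end of the polynomial, which is insensitive to noncompactness: the constant term of $P_t$ is $\dim H^0$, i.e.\ the number of connected components. Setting $t=0$ in the formula of Theorem \ref{poipol}, every fixed point with at least one positive exponent is killed, and exactly one fixed point has all exponents equal to zero, so $P_0(\M^p(r,k,n))=1$. Hence the moduli space is connected, and being smooth by Proposition \ref{moduli}, it is irreducible. This is precisely the paper's proof; your final appeal to smoothness and to the identification $\widetilde\M^p(r,k,n)=\M^p(r,k,n)$ for integer $k$ is fine, but the cohomological criterion feeding into it must be the constant term, not the top term.
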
 
\begin{proof} By letting $t=0$ in the formula for the Poincar\'e polynomial we see that
 $\M^p(r,k,n)$ is connected, and since it is smooth (Proposition \ref{moduli}), it is also irreducibile.
 \end{proof}

\subsection{Hilbert schemes of points}
For $r=1$ Theorem \ref{poipol} yields the Poincar\'e polynomial of the Hilbert schemes
of points of the spaces $X_p$. The generating function for these is given by
\begin{equation}\label{same}\sum_{n=0}^\infty P_t(X_p^{[n]} )q^n =  \prod_{j=1}^{\infty} 
\frac{1}{(1-t^{2j-2}q^j)(1-t^{2j}q^j)}\,
\end{equation}
This is independent of $p$, according to what was noted  in \cite[Cor.~7.7]{NakaBook}:  the
Betti numbers of the Hilbert scheme of the total space of a line bundle on a Riemann
surface $\Sigma$ do not depend on the line bundle. Indeed formula \eqref{same} is the formula of \cite[Cor.~7.7]{NakaBook} for $\Sigma=\mathbb P^1$.

\subsection{The rank 2 case}
One can  
examine $\M^p(2,k,n)$  in some detail.
 In the case
$p=1$ our result coincides with   Corollary 3.19 of \cite{NY-L},
where it is also shown (identity (3.20)) that  the generating
function of the Poincar\'e polynomial can be further factorized
  to obtain an elegant product formula. Our computations
  also agree with some explicit characterizations of the
  spaces $\M^1(2,k,n)$ given in \cite{BM}.

\subsection{The case $p=2$}
This is examined in
\cite{sasaki}.  Besides usual line bundles $\cO (k_{\alpha} C)$
with integer $k_\alpha$, the author (without much mathematical
justification) considers there also line bundles with half-integer
Chern class $k_\alpha$ (see discussion after eq. (3.35) of
\cite{sasaki}). This is what we get in the ``stacky'' case for $p=2$.

It is interesting that incorporating the  terms
corresponding to the half-integer $k_\alpha$ again leads to a
product formula (see (2.62)-(2.64) of \cite{sasaki}). This
strongly suggests that one should be able to write a general
factorized expression for the the generating function of the
Poincar\'e polynomial  in the case  of the stacky Hirzebruch surfaces for all values of $p$.
 
\subsection{The case of minimal discriminant} 
We would  also like to compare our computation with the result in Proposition
\ref{boundprop}, which implies that whenever $n$ reaches the lower bound
$N=\frac{p k}{2r} (r-k)$, the moduli space $\M^p(r,k,N)$ is homotopic to the Grassmannian
variety $G_k(r)$. Coherently with the fact that all sheaves in $\M^p(r,k,N)$ are locally free,
in computing the Poincar\'e polynomial for this case one only considers
empty Young tableaux, and takes values  $k_\alpha$ that
obey the condition
$$
\sum_{\alpha=1}^rk_\alpha=\sum_{\alpha=1}^rk_\alpha^2.
$$
This  implies that each $k_\alpha$ is
either $0$ or $1$. Thus the fixed points in the case (\ref{bound})
are in a one-to-one correspondence with the sequences
$\vec{k}=(k_0,k_1\cdots ,k_r)$ with $k_\alpha\in \{0,1\}$ and
$\sharp 1=k$. Such sequences may be conveniently  represented as
Young tableaux (below denoted as $Y_{\vec{k}}$) embedded in a
rectangle of size $(r-k)\times k $ as follows: starting from the
left top of the rectangle successively draw  a vertical (horizontal)
line segment of unit size if you encounter $0$ (1). According to
Theorem  \ref{thm}   Poincar\'e polynomial for
this   case is
\begin{equation}
P_t=\sum_{\vec{k}}\prod_{\alpha <\beta}t^{2l'_{\alpha,\beta}}
\label{Poincare1}
\end{equation}
According to eq. (17) all $l'_{\alpha,\beta}=0$, besides the cases
with $k_\alpha =1$ and $k_\beta =0$ for which
$l'_{\alpha,\beta}=1$. But the number of latter cases exactly
matches with the number of hooks of the given Young tableau. Since
the number of hooks is the same as the number of boxes, we obtain
\begin{equation}
P_t=\sum_{\vec{k}}t^{2 |Y_{\vec{k}}|}. \label{Poincare2}
\end{equation}
Thus the Poincare polynomial is nothing but the generating
function (with respect to the parameter $q=t^2$) of the partitions
with number of parts (i.e.length) $\le r-k$ and with largest part
$\le k$. This generating function is known to coincide with
the ``$q$-binomial coefficient" \cite[p.~27]{Mac}
$$ \left[\begin{array}{cc} r \\ k \end{array}\right]
=\frac{(1-q^{r-k+1})\cdots
(1-q^r)}{(1-q)\cdots(1-q^k)},
$$
which shows that the Poincar\'e polynomial (\ref{Poincare2})
coincides with the Poincar\'e polynomial of the Grassmannian
$G_k(r)$.  
\par\bigskip\frenchspacing

\end{document}